\definecolor{lightgray}{rgb}{0.8, 0.8, 0.8}
\definecolor{darkgray}{rgb}{0.7, 0.7, 0.7}
\newcounter{todocounter}
\theoremstyle{plain}
\newtheorem{theorem}{Theorem}
\newtheorem{proposition}[theorem]{Proposition}
\newtheorem{question}[theorem]{Question}
\theoremstyle{definition}
\newfont{\footsc}{cmcsc10 at 8truept}
\newfont{\footbf}{cmbx10 at 8truept}
\newfont{\footrm}{cmr10 at 10truept}
\renewenvironment{abstract}{
	\begin{list}{}%
	{\setlength{\rightmargin}{1in}%
	\setlength{\leftmargin}{1in}}%
	\item[]\ignorespaces\begin{small}}%
	{\end{small}\unskip\end{list}%
}
\newcommand{\Av}{\operatorname{Av}}
\newcommand{\Grid}{\operatorname{Grid}}
\newcommand{\C}{\mathcal{C}}
\newcommand{\D}{\mathcal{D}}
\newcommand{\M}{\mathcal{M}}
\newcommand{\gr}{\mathrm{gr}}
\newcommand{\lgr}{\underline{\gr}}
\newcommand{\ugr}{\overline{\gr}}
\renewcommand{\vec}[1]{\mathbf{#1}}
\newcommand\mybullet{\raisebox{-5pt}{\normalsize \ensuremath{\bullet}}}
\newcommand\mycirc{\raisebox{-5pt}{\normalsize \ensuremath{\circ}}}
\def\absdot{\@ifnextchar[{\@absdotlabel}{\@absdotnolabel}}
	\def\@absdotlabel[#1]#2{%
		\node at #2 {\normalsize \mybullet};
		\node at #2 [below=2pt] {\ensuremath{#1}};
	}
	\def\@absdotnolabel#1{%
		\node at #1 {\normalsize \mybullet};
	}
\def\absdothollow{\@ifnextchar[{\@absdothollowlabel}{\@absdothollownolabel}}
	\def\@absdothollowlabel[#1]#2{%
		\node at #2 {\normalsize \textcolor{white}{\mybullet}};
		\node at #2 {\normalsize \mycirc};
		\node at #2 [below=2pt] {\ensuremath{#1}};
	}
	\def\@absdothollownolabel#1{%
		\node at #1 {\normalsize \textcolor{white}{\mybullet}};
		\node at #1 {\normalsize \mycirc};
	}
\newcommand{\plotperm}[1]{
	\foreach \j [count=\i] in {#1} {
		\absdot{(\i,\j)};
	};
}
\newcommand{\plotpermbox}[4]{
	\draw [darkgray, thick, line cap=round]
		({#1-0.5}, {#2-0.5}) rectangle ({#3+0.5}, {#4+0.5});
}
\newcommand{\plotpermgraph}[1]{
	\foreach \j [count=\i] in {#1} {
		\foreach \b [count=\a] in {#1} {
			\ifthenelse{\a<\i \AND \b>\j}{\draw (\a,\b)--(\i,\j);}{}
		};
	};
	\plotperm{#1};
}
\newcommand{\plotpermdyckpath}[1]{
	\draw[ultra thick, line cap=round] (0.5,0.5)
	\foreach \step in {#1} {
		\ifnum\step=1
			-- ++(0,1)
		\else
			-- ++(1,0)
		\fi
	};
}
\newcommand{\plotdyckpath}[1]{
	\draw[ultra thick, line cap=round] (0.5,0)
	\foreach \step in {#1} {
		\ifnum\step=1
			-- ++(1,1)
		\else
			-- ++(1,-1)
		\fi
	};
}
\newcommand{\arcskinnyplain}[2]{
	\draw[thick] (#1,0) arc (180:0:{(#2-#1)/2});
}
\newcommand{\matchsmall}[1]{
	\begin{tikzpicture}[scale=.1, anchor=base]
		\def\h{0};
		\def\maxh{0};
		\foreach \i/\j in {#1} {
			\pgfmathparse{\j-\i};
			\let\h\pgfmathresult;
			\pgfmathifthenelse{\h>\maxh}{\h}{\maxh};
			\global\let\maxh\pgfmathresult;
		};
		\pgftransformyscale{{4.5/\maxh}};
		\foreach \i/\j in {#1} {
			\arcskinnyplain{\i}{\j};
		};
	\end{tikzpicture}
}
\newcommand{\matchpermsmall}[1]{
	\begin{tikzpicture}[scale=.1, anchor=base]
		\foreach \j [count=\n] in {#1} {};
		\def\h{0};
		\def\maxh{0};
		\foreach \j [count=\i] in {#1} {
			\pgfmathparse{2*\n+1-\j-\i};
			\let\h\pgfmathresult;
			\pgfmathifthenelse{\h>\maxh}{\h}{\maxh};
			\global\let\maxh\pgfmathresult;
		};
		\pgftransformyscale{{4.5/\maxh}};
		\foreach \j [count=\i] in {#1} {
			\arcskinnyplain{\i}{{2*\n+1-\j}};
		};
	\end{tikzpicture}
}
\title{\sc On the Growth of Merges and Staircases of Permutation Classes}
\author{\centering
\begin{tabular}{ccccc}
Michael Albert
&\rule{0pt}{0pt}&
Jay Pantone
&\rule{0pt}{0pt}&
Vincent Vatter\footnote{Vatter's research was partially supported by the National Science Foundation under Grant Number DMS-1301692.}\\[-0.25ex]
\small Department of Computer Science
&&
\small Department of Mathematics
&&
\small Department of Mathematics\\[-0.5ex]
\small University of Otago
&&
\small Dartmouth College
&&
\small University of Florida\\[-0.5ex]
\small Dunedin, New Zealand
&&
\small Hanover, New Hampshire USA
&&
\small Gainesville, Florida USA\\[-1.5ex]
\end{tabular}
}
\titleformat{\section}{\large\sc}{\thesection.}{1em}{}
\date{}
\begin{document}
\maketitle

\pagestyle{main}

\begin{abstract}
There is a well-known upper bound on the growth rate of the merge of two permutation classes. Curiously, there is no known merge for which this bound is not achieved. Using staircases of permutation classes, we provide sufficient conditions for this upper bound to be achieved. In particular, our results apply to all merges of principal permutation classes. We end by demonstrating how our techniques can be used to reprove a result of B\'ona.
\end{abstract}

\section{Introduction}

Let $\pi$ be a permutation of length $n$. We utilize one-line notation, writing $\pi = \pi(1)\pi(2)\cdots\pi(n)$. A permutation $\sigma$ of length $k$ is \emph{contained in} $\pi$ (denoted $\sigma \leq \pi$) if there exist indices $1 \leq i_1 < i_2 < \cdots < i_k \leq n$ such that the subsequence $\pi(i_1), \pi(i_2), \ldots, \pi(i_k)$ of $\pi$ is in the same relative order as $\sigma$. For example, the permutation $32514$ contains the permutation $132$ because of the subsequence $254$ (among others). If $\pi$ does not contain $\sigma$, then we say that $\pi$ \emph{avoids} $\sigma$.

A set of permutations $\C$ is called a \emph{permutation class} if it has the property that when $\pi \in \C$ and $\sigma \leq \pi$, then $\sigma \in \C$. Permutation classes are precisely the downsets in the poset of all permutations induced by the containment relation. 

Let $\C_n$ denote the set of permutations in $\C$ of length $n$. We define the \emph{lower and upper (exponential) growth rates} by
\[
	\lgr(\C) = \liminf_{n\rightarrow\infty} \sqrt[n]{|\C_n|},\quad \text{ and } \quad \ugr(\C)=\limsup_{n\rightarrow\infty} \sqrt[n]{|\C_n|},
\]
respectively. When these quantities are equal, their common value is called the \emph{(proper) growth rate} of $\C$ and is denoted $\gr(\C)$. The Marcus--Tardos Theorem~\cite{marcus:excluded-permut:} asserts that the lower and upper growth rates of all classes other than the class of all permutations are finite. It is conjectured that they are always equal, i.e., that every class has a proper growth rate.

Every permutation class is uniquely defined by the set $B$ of minimal permutations avoided by all permutations in the class, called its \emph{basis}. The class with basis $B$ is denoted $\Av(B)$. There has been particular interest in computing the growth rates of \emph{principal classes}, which are those defined by avoiding a single permutation, i.e., those with a singleton basis.

Given two permutation classes $\C$ and $\D$, their \emph{merge}, written $\C\odot\D$, is the set of all permutations whose entries can be colored red and blue so that the red subsequence is order isomorphic to a member of $\C$ and the blue subsequence is order isomorphic to a member of $\D$. We further call such a coloring a \emph{$(\C,\D)$ coloring}.

Merges seem to appear only rarely ``in nature'', with two notable exceptions. First, it is well known that the entries of a $k\cdots 21$-avoiding permutation can be partitioned into $k-1$ increasing subsequences, from which it follows that
\[
	\Av(k\cdots 21)
	=
	\underbrace{\Av(21)\odot\cdots\odot\Av(21)}_{\text{$k-1$ copies of $\Av(21)$}}
	=
	\Av((k-1)\cdots 21)\odot\Av(21).
\]
First studied by Stankova~\cite{stankova:forbidden-subse:}, the class of \emph{skew merged permutations} is also a merge; it is the class $\Av(21)\odot\Av(12)$.

Very little is known about the asymptotic behavior of the sequence $|(\C \odot \D)_n|$, even when the asymptotic behaviors of $|\C_n|$ and $|\D_n|$ are known exactly. An upper bound on $|(\C\odot\D)_n|$ can be obtained by noting that there are ${n\choose i}^2$ ways to partition a permutation of length $n$ into two subpermutations of lengths $i$ and $n-i$, yielding 
\begin{equation}
\label{eqn-merge-upper-bound}\tag{$\dagger$}
	|(\C\odot\D)_n|
	\le
	\sum_{i=0}^n {n\choose i}^2|\C_i||\D_{n-i}|.
\end{equation}
A comparison between \eqref{eqn-merge-upper-bound} and the Binomial Theorem yields the following upper bound on $\ugr(\C\odot\D)$, which first appeared implicitly in the work of the first author~\cite{albert:on-the-length-o:} and was rediscovered by Claesson, Jel{\'{\i}}nek, and Steingr{\'{\i}}msson~\cite{claesson:upper-bounds-fo:}).

\begin{proposition}
\label{prop-merge-gr}
For any two permutation classes $\C$ and $\D$,
\[
	\ugr(\C\odot\D)
	\le
	\left(\sqrt{\ugr(\C)}+\sqrt{\ugr(\D)}\right)^2.
\]
\end{proposition}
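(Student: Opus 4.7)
The plan is to apply the bound \eqref{eqn-merge-upper-bound} and then estimate the right-hand side using exponential upper bounds on $|\C_i|$ and $|\D_{n-i}|$, recognizing the resulting weighted sum of squared binomial coefficients as being dominated by $(\sqrt{\ugr(\C)}+\sqrt{\ugr(\D)})^{2n}$. Without loss of generality assume both $\ugr(\C)$ and $\ugr(\D)$ are finite (otherwise the bound is vacuous). For any $\epsilon > 0$, the definition of upper growth rate provides a constant $K = K_\epsilon$ such that $|\C_n| \le K(\ugr(\C)+\epsilon)^n$ and $|\D_n| \le K(\ugr(\D)+\epsilon)^n$ for every $n \ge 0$. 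Substituting into \eqref{eqn-merge-upper-bound} and writing $u = \sqrt{\ugr(\C)+\epsilon}$ and $v = \sqrt{\ugr(\D)+\epsilon}$ gives
\[
    |(\C\odot\D)_n| \;\le\; K^2 \sum_{i=0}^n \binom{n}{i}^2 u^{2i} v^{2(n-i)}.
\]

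The combinatorial heart of the argument is the inequality $\binom{n}{i}^2 \le \binom{2n}{2i}$, which is immediate from Vandermonde's convolution
\[
    \binom{2n}{2i} \;=\; \sum_{j=0}^{2i} \binom{n}{j}\binom{n}{2i-j},
\]
since the $j=i$ summand equals $\binom{n}{i}^2$ and every summand is non-negative. Using this to replace $\binom{n}{i}^2$ by $\binom{2n}{2i}$, the resulting sum is precisely the collection of even-indexed terms in the binomial expansion of $(u+v)^{2n}$, and hence
\[
    \sum_{i=0}^n \binom{2n}{2i} u^{2i} v^{2(n-i)} \;=\; \tfrac{1}{2}\bigl((u+v)^{2n} + (u-v)^{2n}\bigr) \;\le\; (u+v)^{2n}.
\]

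Combining these estimates yields $|(\C\odot\D)_n| \le K^2 (u+v)^{2n}$ for all $n$. Taking $n$-th roots, letting $n \to \infty$, and then letting $\epsilon \to 0$ gives $\ugr(\C\odot\D) \le (\sqrt{\ugr(\C)}+\sqrt{\ugr(\D)})^2$, as desired. No step is genuinely difficult: the only real observation needed beyond \eqref{eqn-merge-upper-bound} is the inequality $\binom{n}{i}^2 \le \binom{2n}{2i}$, which is what allows the squared binomial coefficients to be absorbed into a true $2n$-th power rather than merely an $(n+\text{junk})$-th power; a naive bound such as $\binom{n}{i} \le 2^n$ would instead yield only the weaker $\ugr(\C\odot\D) \le 2(\ugr(\C) + \ugr(\D))$.
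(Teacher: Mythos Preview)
Your proof is correct and follows the same approach that the paper sketches: start from \eqref{eqn-merge-upper-bound} and compare with the Binomial Theorem. The paper leaves this comparison as a one-line remark, while you make it explicit via the Vandermonde inequality $\binom{n}{i}^2\le\binom{2n}{2i}$ and the even-index identity $\sum_i\binom{2n}{2i}u^{2i}v^{2(n-i)}=\tfrac{1}{2}\bigl((u+v)^{2n}+(u-v)^{2n}\bigr)$; these are exactly the details one needs to turn the paper's remark into a full argument.
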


The only known lower bound on the growth rate of $\C\odot\D$ in general is $\gr(\C)+\gr(\D)$, which is achieved by their \emph{juxtaposition}, the class of permutations which consist of a prefix order isomorphic to a member of $\C$ followed by a suffix order isomorphic to a member of $\D$. Despite the large gap between bounds, we are not aware of any pair of permutation classes whose merge does \emph{not} achieve the bound in Proposition~\ref{prop-merge-gr}. This is the main question addressed here: Under what conditions on $\C$ and $\D$ can we guarantee that the upper bound on the growth rate of $\C \odot \D$ provided by Proposition~\ref{prop-merge-gr} is actually achieved?

One sufficient condition for the growth rate of $\C\odot\D$ to achieve the upper bound in Proposition~\ref{prop-merge-gr} is that $\C$ and $\D$ have finite intersection:

\begin{proposition}[Albert~\cite{albert:on-the-length-o:}]
\label{prop-merge-gr-finite-intersection}
For any two permutation classes $\C$ and $\D$ that have proper growth rates and finite intersection,
\[
	\gr(\C\odot\D)
	=
	\left(\sqrt{\gr(\C)}+\sqrt{\gr(\D)}\right)^2.
\]
\end{proposition}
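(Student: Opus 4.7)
The upper bound is already given by Proposition~\ref{prop-merge-gr}, so the task is to prove the matching lower bound $\lgr(\C\odot\D)\ge\left(\sqrt{\gr(\C)}+\sqrt{\gr(\D)}\right)^2$ under the finiteness hypothesis. The plan is to show that when $\C\cap\D$ is finite, the counting inequality \eqref{eqn-merge-upper-bound} is tight up to a polynomial factor, because each permutation in $\C\odot\D$ can be $(\C,\D)$-colored in only polynomially many ways.

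The first step is the observation that the right-hand side of \eqref{eqn-merge-upper-bound} is precisely the number of \emph{colored} permutations of length $n$ in $\C\odot\D$: a $(\C,\D)$ coloring of length $n$ with $i$ red entries and $n-i$ blue entries is specified by choosing the $i$ positions of the red entries (${n\choose i}$ ways), the $i$ values carried by the red entries (${n\choose i}$ ways), the red pattern in $\C_i$, and the blue pattern in $\D_{n-i}$. So if $N(\pi)$ denotes the number of $(\C,\D)$ colorings of $\pi$, then
\[
    \sum_{\pi\in(\C\odot\D)_n} N(\pi) \;=\; \sum_{i=0}^n {n\choose i}^2|\C_i||\D_{n-i}|.
\]

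The key step is to bound $N(\pi)$ by a polynomial in $n$. Suppose $\pi$ admits two $(\C,\D)$ colorings $c_1$ and $c_2$, with red sets $R_1$ and $R_2$. The entries in $R_1\setminus R_2$ (red in $c_1$, blue in $c_2$) form a subsequence that, as a pattern, lies in both $\C$ (being a subpermutation of the red subpermutation of $c_1$) and $\D$ (as a subpermutation of the blue subpermutation of $c_2$); the same holds for $R_2\setminus R_1$. Letting $M$ be the maximum length of a permutation in $\C\cap\D$ (finite by hypothesis), the symmetric difference $R_1\mathbin{\triangle}R_2$ has size at most $2M$. Thus, fixing one coloring, every other coloring is obtained by toggling at most $2M$ entries, so $N(\pi)\le\sum_{k=0}^{2M}{n\choose k}=O(n^{2M})$.

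Combining these two facts, $|(\C\odot\D)_n|\ge \frac{1}{O(n^{2M})}\sum_{i=0}^n {n\choose i}^2|\C_i||\D_{n-i}|$. It remains to show that this sum has $n$th root tending to $\left(\sqrt{\gr(\C)}+\sqrt{\gr(\D)}\right)^2$. For the lower bound, fix $\varepsilon>0$; since $\C$ and $\D$ have proper growth rates, $|\C_i|\ge(\gr(\C)-\varepsilon)^i$ and $|\D_j|\ge(\gr(\D)-\varepsilon)^j$ for all sufficiently large $i,j$. Restricting the sum to a single dominant term $i\approx n\sqrt{\gr(\C)}/(\sqrt{\gr(\C)}+\sqrt{\gr(\D)})$ (where both $i$ and $n-i$ are linear in $n$) and applying Stirling's approximation to ${n\choose i}^2$ gives a term of the form $\left(\sqrt{\gr(\C)-\varepsilon}+\sqrt{\gr(\D)-\varepsilon}\right)^{2n}$ up to subexponential factors. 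Taking $n$th roots, dividing out the polynomial loss, and then letting $\varepsilon\to 0$ yields the desired lower bound, completing the proof.

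The only delicate point is the two-coloring comparison in the middle step, but once one notices that the discrepancy lives in $\C\cap\D$ on both sides the argument is immediate; the asymptotic estimate at the end is essentially the reverse of the computation already used for Proposition~\ref{prop-merge-gr}.
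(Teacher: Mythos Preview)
Your proof is correct and follows essentially the same route as the paper's: both fix a reference $(\C,\D)$ coloring, observe that any other coloring differs from it on a set whose two halves each pattern to an element of $\C\cap\D$ (hence of size at most $2M$), deduce that the number of colorings of any $\pi$ is bounded by $\sum_{k\le 2M}\binom{n}{k}$, and conclude that the sum $\sum_i\binom{n}{i}^2|\C_i||\D_{n-i}|$ overcounts $|(\C\odot\D)_n|$ by only a polynomial factor. The only difference is cosmetic: the paper dispatches the asymptotics of that sum with a one-line appeal to the Binomial Theorem, whereas you spell out the Stirling estimate on the dominant term.
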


To establish Proposition~\ref{prop-merge-gr-finite-intersection}, suppose that the longest permutation in $\C\cap\D$ has length $m$. For each permutation $\pi\in\C\odot\D$ fix a distinguished $(\C,\D)$ coloring. Now take any other $(\C,\D)$ coloring, and define a vector $\vec{d}\in\{0,1\}^n$ by setting $\vec{d}_i=1$ if $\pi(i)$ is a different color in this coloring than it is in the distinguished coloring. The subsequence of $\pi$ consisting of those entries which are red in the distinguished coloring but blue in this coloring is order isomorphic to a member of $\C\cap\D$, and thus has length at most $m$. Similarly, there can be at most $m$ entries which are blue in the distinguished coloring but red in this coloring. Therefore $\vec{d}$ can have at most $2m$ entries equal to $1$, giving us the inequality
\[
	\sum_{i=0}^n {n\choose i}^2|\C_i||\D_{n-i}|
	\le
	|(\C\odot\D)_n|\sum_{i=0}^{2m} {n\choose i}.
\]
The sum on the right-hand side is a degree $2m+1$ polynomial in $n$ and thus does not affect the $n^{\mbox{\scriptsize th}}$ roots in the limit, which together with the upper bound given by Proposition~\ref{prop-merge-gr} proves Proposition~\ref{prop-merge-gr-finite-intersection}.

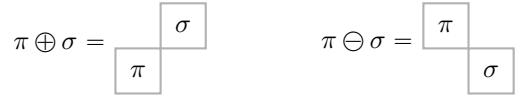
\begin{figure}
\begin{center}
	$\pi\oplus\sigma=$
	\begin{tikzpicture}[scale=0.3, baseline=(current bounding box.center)]
		\plotpermbox{0}{0}{1}{1};
		\plotpermbox{2}{2}{3}{3};
		\node at (0.5,0.5) {$\pi$};
		\node at (2.5,2.5) {$\sigma$};
	\end{tikzpicture}
\quad\quad\quad\quad
	$\pi\ominus\sigma=$
	\begin{tikzpicture}[scale=0.3, baseline=(current bounding box.center)]
		\plotpermbox{0}{2}{1}{3};
		\plotpermbox{2}{0}{3}{1};
		\node at (0.5,2.5) {$\pi$};
		\node at (2.5,0.5) {$\sigma$};
	\end{tikzpicture}
\end{center}
\caption{The sum and skew sum operations.}
\label{fig-sums}
\end{figure}

In the next section we establish another sufficient condition for the growth rate of $\C\odot\D$ to match the upper bound in Proposition~\ref{prop-merge-gr}. To state it, we must first introduce two operations that can be performed on permutations. The \emph{sum}, $\pi\oplus\sigma$, of the permutations $\pi$ and $\sigma$ is the permutation consisting of $\pi$ followed by a shifted copy of $\sigma$, as shown on the left of Figure~\ref{fig-sums}. The drawing on the right of that figure shows a symmetry of this operation called the \emph{skew sum}. A class $\C$ is \emph{sum closed} if $\pi\oplus\sigma\in\C$ for all $\pi,\sigma\in\C$ and \emph{skew closed} if $\pi\ominus\sigma\in\C$ for all $\pi,\sigma\in\C$.

A simple application of Fekete's Lemma for super-multiplicative sequences shows that sum closed and skew closed classes have proper growth rates (this argument was first given by Arratia~\cite{arratia:on-the-stanley-:}). We can now state our main result.

\begin{theorem}
\label{thm-merge-gr-staircase}
If each of the classes $\C$ and $\D$ is either sum or skew closed then
\[
	\gr(\C\odot\D)
	=
	\left(\sqrt{\gr(\C)}+\sqrt{\gr(\D)}\right)^2.
\]
\end{theorem}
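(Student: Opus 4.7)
The upper bound on $\gr(\C\odot\D)$ is provided by Proposition~\ref{prop-merge-gr}, so the remaining task is to establish the matching lower bound $(\sqrt{\gr(\C)}+\sqrt{\gr(\D)})^2 \le \gr(\C\odot\D)$.

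I would begin with a symmetry reduction. Reversal of permutations interchanges $\oplus$ and $\ominus$, so a skew-closed class becomes sum closed after reversal; moreover reversal commutes with merges, in the sense that $(\C\odot\D)^r = \C^r\odot\D^r$, and preserves growth rates. Applying reversal to whichever of $\C$ and $\D$ is skew closed, I may assume without loss of generality that both classes are sum closed.

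Under this assumption, the plan is to exhibit a sufficiently large subclass of $\C\odot\D$ via a \emph{$(\C,\D)$-staircase}: an appropriate grid class with cells alternating between $\C$ and $\D$ along a diagonal shape. A member of such a staircase decomposes into blocks, and the natural two-coloring (red on $\C$-blocks, blue on $\D$-blocks) gives a $(\C,\D)$-coloring because, by sum closure, the combined red entries form a direct sum of $\C$-members and hence lie in $\C$, and likewise for blue. Thus the staircase is contained in $\C\odot\D$. I would then estimate the growth rate of this staircase using a generating-function analysis in $f_\C(z) = \sum_{n\geq 0} |\C_n| z^n$ and $f_\D(z) = \sum_{n \geq 0} |\D_n| z^n$ (or, equivalently, a transfer-matrix computation), with a saddle-point argument showing that the count grows like $(\sqrt{\gr(\C)}+\sqrt{\gr(\D)})^{2n}$ as $n\to\infty$ (with the number of blocks chosen appropriately with $n$).

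The main obstacle is the choice of staircase shape. A naive alternating direct sum $C_1 \oplus D_1 \oplus C_2 \oplus D_2 \oplus \cdots$ with each $C_i \in \C$ and $D_j \in \D$ already lies in $\C\odot\D$, but yields a growth rate strictly smaller than the target; already for $\C = \D = \Av(21)$ it only reaches $2$, rather than the correct value $4 = \gr(\Av(321))$. The staircase must therefore be \emph{thicker}, permitting the $\C$- and $\D$-blocks to interleave along both the position and the value axes, so as to capture the $\binom{n}{i}^2$ combinatorial factor implicit in the upper bound of Proposition~\ref{prop-merge-gr}. Verifying that distinct staircase decompositions yield distinct permutations, so that the count is a genuine lower bound on $|(\C\odot\D)_n|$ rather than merely an upper bound on the number of colorings, is the delicate combinatorial check on which the whole argument will rest.
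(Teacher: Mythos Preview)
Your symmetry reduction is flawed. Reversal does swap $\oplus$ and $\ominus$, and it is true that $(\C\odot\D)^r=\C^r\odot\D^r$, but you cannot reverse ``whichever of $\C$ and $\D$ is skew closed'' individually: any symmetry of the square applied to $\C\odot\D$ acts simultaneously on both factors. If $\C$ is sum closed and $\D$ is skew closed, then after reversal $\C^r$ is skew closed and $\D^r$ is sum closed, so you are back in the mixed case. In fact no symmetry of the dihedral group reduces the mixed case to the case of two sum closed classes, because every symmetry either fixes or swaps the sum/skew distinction globally. The paper handles the mixed case not by symmetry but by using a different staircase shape---a counterclockwise spiral in which the $\C$-cells march up the diagonal while the $\D$-cells lie alternately above-left and below-right, so that the union of $\D$-cells forms a skew sum rather than a direct sum.

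Aside from this gap, your overall strategy matches the paper's: embed a $(\C,\D)$ staircase in $\C\odot\D$ and show the staircase already attains the upper bound. The paper, however, avoids both the saddle-point estimate and the ``distinct decompositions give distinct permutations'' worry you flag at the end. It quotes a general theorem (Albert--Vatter) computing the growth rate of any finite grid class as the largest eigenvalue of $\Gamma\Gamma^T$, where $\Gamma_{k,\ell}=\sqrt{\gr(\M_{k,\ell})}$; for the $t$-step staircase this matrix is tridiagonal Toeplitz with explicit eigenvalues $\gr(\C)+\gr(\D)+2\sqrt{\gr(\C)\gr(\D)}\cos(j\pi/(t+1))$, and letting $t\to\infty$ gives the lower bound directly. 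This route is cleaner than a bespoke generating-function argument and makes the overcounting issue disappear, since the cited theorem already concerns ungridded permutations.
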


In particular, all principal classes are either sum  or skew closed, and thus we see that the growth rate of the merge of any two principal classes is equal to the upper bound in Proposition~\ref{prop-merge-gr}.

A striking example of the usefulness of Theorem~\ref{thm-merge-gr-staircase} is its application to $\Av(k\cdots 21)$. Because permutations in this class can be partitioned into $k-1$ increasing subsequences, it is easy---even without appealing to Proposition~\ref{prop-merge-gr}---to see that $|\Av_n(k\cdots 21)|\le (k-1)^{2n}$, and thus that $\gr(\Av(k\cdots 21))\le (k-1)^2$. That this upper bound is the actual growth rate was first established by Regev~\cite{regev:asymptotic-valu:} via a deep argument (though it should be noted that Regev established quite a bit more than establishing these growth rates). However this fact follows easily from Theorem~\ref{thm-merge-gr-staircase} via induction because $\Av(k\cdots 21)=\Av((k-1)\cdots 21)\odot\Av(21)$.

With no known counterexamples, we are compelled to ask if the upper bound on the growth rate of the merge of two classes is always correct:

\begin{question}
\label{question-merge-gr}
Is it the case that
\[
	\gr(\C\odot\D)
	=
	\left(\sqrt{\gr(\C)}+\sqrt{\gr(\D)}\right)^2
\]
for every pair of classes $\C$ and $\D$ with proper growth rates?
\end{question}

We prove Theorem~\ref{thm-merge-gr-staircase} in the next section and present an application of it in Section~\ref{sec-principal-gr}. We conclude by discussing a candidate for the ``next'' most obvious merge to consider in investigating Question~\ref{question-merge-gr}.

\section{Staircases}

In order to prove Theorem~\ref{thm-merge-gr-staircase} we must take a detour to study certain permutation classes that we call \emph{staircase classes}, which are special cases of infinite grid classes of permutations. Therefore, in this section we will first define grid classes and recall an important result about their growth rates. Then, we will define staircase classes, compute a bound on their growth rates, and conclude the section by illustrating the connection between staircase classes and merges.

Suppose that $\M$ is a $t\times u$ matrix of permutation classes, where $t$ is the number of columns, and $u$ the number of rows. An \emph{$\M$-gridding} of the permutation $\pi$ of length $n$ is a pair of sequences $1=c_1\le\cdots\le c_{t+1}=n+1$ (the column divisions) and $1=r_1\le\cdots\le r_{u+1}=n+1$ (the row divisions) such that for all $1\le k\le t$ and $1\le\ell\le u$, the entries of $\pi$ with indices in $[c_k,c_{k+1})$ and values in $[r_{\ell}, r_{\ell+1})$ are order isomorphic to an element of $\M_{k,\ell}$.  The \emph{grid class of $\M$}, written $\Grid(\M)$, consists of all permutations which possess an $\M$-gridding. The aforementioned juxtaposition of $\C$ and $\D$ can be expressed in this language as $\Grid(\C\ \D)$

By relating their growth rates to the asymptotics of certain walks in a bipartite graph, Bevan~\cite{bevan:growth-rates-of:} gave a formula for growth rates of \emph{monotone grid classes}, that is, those where every cell is either the empty class $\emptyset$, the increasing class $\Av(21)$, or the decreasing class $\Av(12)$. The first and third authors have since established the following generalization.

\begin{theorem}
[Albert and Vatter~\cite{albert:an-elementary-p:}]
\label{thm-bevan-grid-gr}
Let $\M$ be a $t\times u$ matrix of permutation classes, each with a proper growth rate, and define the $t\times u$ matrix $\Gamma$ by $\Gamma_{k,\ell}=\sqrt{\gr(\M_{k,\ell})}$. The growth rate of $\Grid(\M)$ is equal to the greatest eigenvalue of $\Gamma^T\Gamma$ \textup{(}or equivalently, of $\Gamma\Gamma^T$\textup{)}.
\end{theorem}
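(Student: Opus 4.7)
The plan is to enumerate \emph{gridded permutations}---pairs consisting of a permutation and a witnessing $\M$-gridding---and to identify the growth rate of this count with the eigenvalue $\lambda$. Since a permutation of length $n$ admits at most $n^{t-1}\cdot n^{u-1}$ griddings, the overcounting is polynomial and does not affect growth rates, so it suffices to determine the growth rate of gridded permutations.

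Fix a matrix of cell sizes $(n_{k,\ell})$ and write $s_k=\sum_\ell n_{k,\ell}$, $s'_\ell=\sum_k n_{k,\ell}$. Choosing which of the $s_k$ positions in each column land in each row, which of the $s'_\ell$ values in each row land in each column, and then a pattern from $\M_{k,\ell}$ in each cell produces
\[
    \prod_k \binom{s_k}{n_{k,1},\ldots,n_{k,u}} \cdot \prod_\ell \binom{s'_\ell}{n_{1,\ell},\ldots,n_{t,\ell}} \cdot \prod_{k,\ell}\bigl|(\M_{k,\ell})_{n_{k,\ell}}\bigr|
\]
gridded permutations with these cell sizes. Setting $x_{k,\ell}=n_{k,\ell}/n$, $x_k=\sum_\ell x_{k,\ell}$, $y_\ell=\sum_k x_{k,\ell}$, applying Stirling to each factorial, and invoking $|(\M_{k,\ell})_m|^{1/m}\to\gr(\M_{k,\ell})$, all $\log n$ contributions cancel (they tally to $(1+1-2)\log n=0$ using $\sum_k x_k=\sum_\ell y_\ell=\sum_{k,\ell}x_{k,\ell}=1$), and $n^{-1}$ times the log of the summand tends to
\[
    H(x)=\sum_k x_k\log x_k+\sum_\ell y_\ell\log y_\ell-2\sum_{k,\ell}x_{k,\ell}\log x_{k,\ell}+\sum_{k,\ell}x_{k,\ell}\log\gr(\M_{k,\ell}).
\]

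I would then maximize $H$ over the simplex $\{x_{k,\ell}\ge 0:\sum x_{k,\ell}=1\}$. The Lagrange condition $\log x_k+\log y_\ell-2\log x_{k,\ell}+\log\gr(\M_{k,\ell})=\mu$ rearranges to $x_{k,\ell}=e^{-\mu/2}\sqrt{x_k y_\ell}\,\Gamma_{k,\ell}$. Writing $a_k=\sqrt{x_k}$, $b_\ell=\sqrt{y_\ell}$, the column- and row-consistency relations $x_k=\sum_\ell x_{k,\ell}$ and $y_\ell=\sum_k x_{k,\ell}$ become the coupled eigenvector equations $a=e^{-\mu/2}\Gamma b$ and $b=e^{-\mu/2}\Gamma^T a$, so $a$ and $b$ are eigenvectors of $\Gamma\Gamma^T$ and $\Gamma^T\Gamma$ with common eigenvalue $e^\mu$. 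A short computation using $\sum_k x_k\log x_k=2\sum_k x_k\log a_k$ and its analogue for $y$ shows $H=\mu$ at such a critical point. Boundary critical points correspond to setting some $x_{k,\ell}=0$ and re-running the analysis on a submatrix of $\Gamma$; since the spectral radius is monotone under principal submatrices (by Perron--Frobenius), the global maximum is $\log\lambda$, where $\lambda$ is the largest eigenvalue of $\Gamma^T\Gamma$.

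The main obstacle is converting this optimization into matching asymptotic bounds on $|\Grid(\M)_n|$. For the upper bound, fix $\varepsilon>0$, use $|(\M_{k,\ell})_m|\le C_\varepsilon(\gr(\M_{k,\ell})+\varepsilon)^m$, and dominate the full sum by the polynomial number of cell-size choices times its maximum summand, concluding $|\Grid(\M)_n|\le\mathrm{poly}(n)(\lambda+\varepsilon')^n$ via continuity of the optimization in the perturbed weights. For the lower bound, concentrate on cell sizes $n_{k,\ell}=\lfloor x^*_{k,\ell}n\rfloor$ at the optimizer $x^*$; since each $\M_{k,\ell}$ has a proper growth rate, $|(\M_{k,\ell})_m|\ge(\gr(\M_{k,\ell})-\varepsilon)^m$ for all large $m$, producing at least $(\lambda-\varepsilon')^{n-o(n)}$ gridded permutations and hence at least $(\lambda-\varepsilon')^{n-o(n)}/\mathrm{poly}(n)$ distinct elements of $\Grid(\M)_n$. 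Letting $\varepsilon'\to 0$ yields $\gr(\Grid(\M))=\lambda$.
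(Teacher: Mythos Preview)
The paper under review does not prove this theorem; it merely quotes it from Albert and Vatter~\cite{albert:an-elementary-p:} and then applies it. There is therefore no ``paper's own proof'' to compare against.

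That said, your sketch is essentially the argument of the cited Albert--Vatter paper: count gridded permutations (polynomial overcount), write the number with fixed cell occupancies as a product of two multinomials times the cell counts, pass to the entropy functional $H$ via Stirling, and identify the maximum of $H$ with $\log\lambda$ through the coupled eigenvector equations $a=e^{-\mu/2}\Gamma b$, $b=e^{-\mu/2}\Gamma^T a$. The counting formula and the Lagrange computation are correct, and the evaluation $H=\mu$ at a critical point follows exactly as you indicate by multiplying the stationarity condition by $x_{k,\ell}$ and summing.

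Two points deserve more care if you want this to stand as a proof rather than a sketch. First, the boundary analysis: setting a single $x_{k,\ell}=0$ does not give a principal submatrix of $\Gamma$, so ``monotonicity under principal submatrices'' is not quite the right invocation. What actually works is that $H$ is concave on the simplex (it is a sum of Shannon-type terms), so any interior critical point is the global maximum; Perron--Frobenius guarantees a nonnegative eigenvector for the top eigenvalue, which yields such an interior (or relative-interior) critical point. Second, cells with $\gr(\M_{k,\ell})=0$ require one to force $x_{k,\ell}=0$ from the outset, effectively replacing $\Gamma$ by the matrix with those entries zeroed; this is harmless but should be said. With those two remarks your argument is complete and matches the original.
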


\begin{figure}
\[
	\begin{array}{ccc}
		\begin{tikzpicture}[scale=0.3, baseline=(current bounding box.center)]
			\foreach \i in {1,3,5,7,9} {
				\plotpermbox{\i}{\i}{\i+1}{\i+1};
				\node at (\i+0.5,\i+0.5) {$\C$};
				\plotpermbox{\i+2}{\i}{\i+3}{\i+1};
				\node at (\i+2.5,\i+0.5) {$\D$};
			}
			\draw[white] (-0.75,-0.75) rectangle (11.75,11.75);
			\useasboundingbox (current bounding box.south west) rectangle (current bounding box.north east);
			\node[rotate=45] at (11.35,11.35) {$\dots$};
			\node[rotate=45] at (13.35,11.35) {$\dots$};
		\end{tikzpicture}
	&\quad\quad&
		\begin{tikzpicture}[scale=0.3, baseline=(current bounding box.center)]
			\foreach \i in {1,3,5,7,9} {
				\plotpermbox{\i}{\i}{\i+1}{\i+1};
				\node at (\i+0.5,\i+0.5) {$\C$};
			}
			\plotpermbox{1}{9}{2}{10};
			\node at (1.5,9.5) {$\D$};
			\plotpermbox{3}{7}{4}{8};
			\node at (3.5,7.5) {$\D$};
			\plotpermbox{7}{5}{8}{6};
			\node at (7.5,5.5) {$\D$};
			\plotpermbox{9}{3}{10}{4};
			\node at (9.5,3.5) {$\D$};
			\draw[white] (-0.75,-0.75) rectangle (11.75,11.75);
			\useasboundingbox (current bounding box.south west) rectangle (current bounding box.north east);
			\node[rotate=45] at (-0.25,-0.25) {$\dots$};
			\node[rotate=135] at (-0.35,11.35) {$\dots$};
			\node[rotate=135] at (11.25,1.75) {$\dots$};
			\node[rotate=45] at (11.35,11.35) {$\dots$};
		\end{tikzpicture}
	\end{array}
\]
\caption{The two staircases we use: the infinite increasing $(\C,\D)$ staircase on the left and the infinite counterclockwise spiral $(\C,\D)$ staircase on the right.}
\label{fig-CD-staircases}
\end{figure}
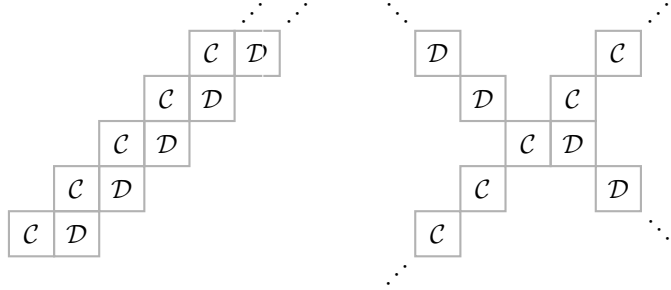

A picture of the infinite increasing $(\C,\D)$ staircase is shown on the left of Figure~\ref{fig-CD-staircases}. Before defining this staircase as a grid class, we should warn the reader that, so that the entries of our matrices align with those of our permutations, we index matrices in Cartesian coordinates. Thus $\M_{k,\ell}$ denotes the entry in the $k^{\mbox{\scriptsize th}}$ row from the bottom and the $\ell^{\mbox{\scriptsize th}}$ column from the left. With that warning issued, the infinite increasing $(\C,\D)$ staircase is equal to
\[
	\Grid\left(\begin{array}{cccc}
	&&\reflectbox{$\ddots$}&\reflectbox{$\ddots$}\\
	&\C&\D\\
	\C&\D\\
	\end{array}\right).
\]
In our indexing, the entries of the main diagonal of the matrix are equal to $\C$ and the entries of the adjacent diagonal are equal to $\D$.

For the rest of this section we assume that the classes $\C$ and $\D$ both have proper growth rates. We define the \emph{$t$-step} increasing $(\C,\D)$ staircase to be the subclass of the infinite staircase corresponding to the first $t$ rows. The matrix defining the $t$-step increasing $(\C,\D)$ staircase therefore has $t$ rows and $t+1$ columns. For this grid class, the matrix $\Gamma$ of Theorem~\ref{thm-bevan-grid-gr} contains diagonal entries equal to $\sqrt{\gr(\C)}$ and subdiagonal entries equal to $\sqrt{\gr(\D)}$. Furthermore, recalling our unusual matrix indexing, we see that $\Gamma\Gamma^T$ is the $t\times t$ matrix defined by
\[
	(\Gamma\Gamma^T)_{k,\ell}
	=
	\left\{\begin{array}{ll}
	\gr(\C)+\gr(\D)&\text{if $k=\ell$,}\\[4pt]
	\sqrt{\gr(\C)\gr(\D)}&\text{if $|k-\ell|=1$, and}\\[4pt]
	0&\text{otherwise.}
	\end{array}\right.
\]

Thus $\Gamma\Gamma^T$ is a tridiagonal Toeplitz matrix, meaning that its nonzero entries are confined to the main diagonal and the two diagonals immediately above and below it (the \emph{tridiagonal} condition) and that its entries along a given diagonal are identical (the \emph{Toeplitz} condition). Tridiagonal Toeplitz matrices are one of the few families of matrices for which exact formulas for their eigenvalues and eigenvectors are known (for example see Meyer~\cite[Example 7.2.5]{meyer:matrix-analysis:}); the eigenvalues of a $t\times t$ tridiagonal Toeplitz matrix with subdiagonal entries $a$, main diagonal entries $b$, and superdiagonal entries $c$ are given by
\[
	\lambda_j=b+2\sqrt{ac}\cos\left(\frac{j\pi}{t+1}\right)
\]
for $j=1,\dots,t$. Applying this to $\Gamma\Gamma^T$, Theorem~\ref{thm-bevan-grid-gr} implies that the growth rate of any $t$-step $(\C,\D)$ staircase is
\begin{equation}
\label{eqn-t-step-staircase-gr}\tag{$\ddagger$}
	\gr(\C)+2\sqrt{\gr(\C)\gr(\D)}\cos\left(\frac{1}{t+1}\right)+\gr(\D).
\end{equation}
As $t\rightarrow\infty$, the central term approaches $2\sqrt{\gr(\C)\gr(\D)}$, showing that the growth rate of the infinite increasing $(\C,\D)$ staircase is at least $(\sqrt{\gr(\C)}+\sqrt{\gr(\D)})^2$.

Indeed, this is a lower bound on the growth rates of a wide variety of other classes that we also call staircase classes. Let $\M$ be an infinite matrix of permutation classes whose nonempty cells are equal to either $\C$ or $\D$ and label these nonempty cells by the positive integers. We say that $\Grid(\M)$ is an \emph{infinite $(\C,\D)$ staircase} if
\begin{itemize}
\item the first cell is equal to $\C$ and is the only nonempty cell in its column,
\item for all $i\ge 1$, the $2i^{\mbox{\scriptsize th}}$ cell is equal to $\D$, lies in the same row as the $(2i-1)^{\mbox{\scriptsize st}}$ cell, and these are the only two nonempty cells in this row, and
\item for all $i\ge 1$, the $(2i+1)^{\mbox{\scriptsize st}}$ cell is equal to $\C$, lies in the same column as the $2i^{\mbox{\scriptsize th}}$ cell, and these are the only two nonempty cells in this column.
\end{itemize}
Given an infinite $(\C,\D)$ staircase defined by the matrix $\M$, we define its $t$-step restriction to be the grid class formed by the first $2t$ cells of $\M$ (and thus by our rules above, this restriction contains $t$ cells equal to $\C$ and $t$ equal to $\D$).

No matter what infinite $(\C,\D)$ staircase we take, the growth rates of its $t$-step restrictions are given by \eqref{eqn-t-step-staircase-gr}. This can be seen algebraically because row and column permutations of $\Gamma$ do not affect the eigenvalues of $\Gamma\Gamma^T$, or combinatorially because rearranging the rows and columns of the matrix $\M$ does not affect the growth rate of $\Grid(\M)$. By considering limits as $t\rightarrow\infty$, we obtain the following result.

\begin{proposition}
\label{prop-staircase-gr}
The growth rate of any infinite $(\C,\D)$ staircase is at least $\left(\sqrt{\gr(\C)}+\sqrt{\gr(\D)}\right)^2$.
\end{proposition}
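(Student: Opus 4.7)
The plan is to formalize the argument already sketched in the two paragraphs immediately preceding the proposition. Fix any infinite $(\C,\D)$ staircase $\Grid(\M)$, and for each $t\ge 1$ let $\Grid(\M^{(t)})$ denote its $t$-step restriction, i.e., the grid class of the submatrix of $\M$ consisting of exactly the first $2t$ cells. By construction $\Grid(\M^{(t)})\subseteq \Grid(\M)$, so any lower bound on $\gr(\Grid(\M^{(t)}))$ automatically lower-bounds $\lgr(\Grid(\M))$, and hence the growth rate of $\Grid(\M)$ whenever it is proper.

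Second, I would verify that $\gr(\Grid(\M^{(t)}))$ is the value given in \eqref{eqn-t-step-staircase-gr}, independently of which infinite staircase we started with. The three bullet points defining an infinite $(\C,\D)$ staircase force the $2t$ nonempty cells of $\M^{(t)}$ to form an alternating $\C$-$\D$ chain in which each cell shares its row with at most one neighbor and its column with at most one neighbor. Consequently, the matrix $\Gamma^{(t)}$ associated to $\M^{(t)}$ via Theorem~\ref{thm-bevan-grid-gr} is obtainable from the $\Gamma$-matrix of the standard $t$-step increasing $(\C,\D)$ staircase by a permutation of the rows and of the columns. Such permutations conjugate $\Gamma^{(t)}(\Gamma^{(t)})^T$ by permutation matrices and therefore leave its spectrum unchanged, so Theorem~\ref{thm-bevan-grid-gr} returns the same value as in the standard case, namely \eqref{eqn-t-step-staircase-gr}.

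Third, I would let $t\to\infty$: the cosine factor in \eqref{eqn-t-step-staircase-gr} tends to $1$, so the expression tends to
\[
	\gr(\C)+2\sqrt{\gr(\C)\gr(\D)}+\gr(\D)=\left(\sqrt{\gr(\C)}+\sqrt{\gr(\D)}\right)^{2}.
\]
Combined with the containment from the first step, this gives the claimed lower bound.

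The only substantive point is the second step, namely that the bullet-point definition really does force $\M^{(t)}$ to be a simultaneous row-and-column rearrangement of the standard $t$-step staircase. This can be handled by a straightforward induction on $t$: the base case $t=1$ is immediate from the first bullet, and at each subsequent step the new $\C$- or $\D$-cell is constrained by the second or third bullet to reuse either the row or the column of its predecessor while occupying a fresh row or column otherwise. This is a bookkeeping exercise rather than a genuine obstacle, and it is what allows us to appeal to the known tridiagonal-Toeplitz eigenvalue formula uniformly across all infinite staircases.
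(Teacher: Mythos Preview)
Your proposal is correct and follows essentially the same route as the paper: use Theorem~\ref{thm-bevan-grid-gr} on the $t$-step restriction, observe that row and column permutations of $\Gamma$ leave the spectrum of $\Gamma\Gamma^T$ unchanged so every $t$-step restriction has growth rate given by \eqref{eqn-t-step-staircase-gr}, and let $t\to\infty$. Your added remarks (the explicit containment $\Grid(\M^{(t)})\subseteq\Grid(\M)$ and the inductive bookkeeping for why the bullet points force a row/column rearrangement of the standard staircase) simply flesh out what the paper asserts in one sentence.
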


We are now ready to establish Theorem~\ref{thm-merge-gr-staircase}. Suppose that each of $\C$ and $\D$ is either sum closed or skew closed. By symmetry, we may suppose that $\C$ is sum closed. If $\D$ is also sum closed, then we see that every member of the infinite increasing $(\C,\D)$ staircase is the merge of a permutation from $\C$ and one from $\D$. As the growth rate of this staircase matches the upper bound on the growth rate of $\C\odot\D$ from Proposition~\ref{prop-staircase-gr}, we are done. Otherwise, $\D$ must be skew closed. In this case, the members of the infinite counterclockwise spiral $(\C,\D)$ staircase shown on the right of Figure~\ref{fig-CD-staircases} are contained in $\C\odot\D$, and again we have achieved the upper bound from Proposition~\ref{prop-staircase-gr}, completing the proof of Theorem~\ref{thm-merge-gr-staircase}.

In some cases, the merge upper bound shows that the lower bound in Proposition~\ref{prop-staircase-gr} is the true growth rate of a staircase. For example, if both $\C$ and $\D$ are sum closed, then the infinite increasing $(\C,\D)$ staircase is contained in $\C\odot\D$, so we know its growth rate. However, unlike the case of merges, we know that the lower bounds in Proposition~\ref{prop-merge-gr-finite-intersection} are \emph{not} always the correct growth rates. 

One rather trivial case which demonstrates this is when we take $\D=\emptyset$; the infinite increasing $(\C,\emptyset)$ staircase is simply the sum closure of $\C$ (the smallest sum closed class containing $\C$), and the lower bound from Proposition~\ref{prop-staircase-gr} shows that the growth rate of this staircase must be at least the growth rate of $\C$ itself. However, taking $\C$ to be the decreasing permutations $\Av(12)$ we see that it has growth rate $1$ while its sum closure (the class of \emph{layered permutations}) has growth $2$.

For another example of inequality where neither class is empty, we turn to the infinite increasing $(\Av(21),\{1\})$ staircase, where again the lower bound in Proposition~\ref{prop-staircase-gr} is $1$. On the other hand, the infinite increasing $(\Av(21),\{1\})$ staircase is easily seen to be the class $\Av(321,4123)$, and it can be shown in a variety of ways (for example, via the insertion encoding of Albert, Linton, and Ru\v{s}kuc~\cite{albert:the-insertion-e:}) that the generating function of this class is $(1-2x)/(1-3x+x^2)$. From this it follows that the growth rate of the infinite increasing $(\Av(21),\{1\})$ staircase is $1+\varphi$ where $\varphi$ denotes the golden ratio.

Thus we might try insisting that both $\C$ and $\D$ are infinite permutation classes. We do not have a proof that equality is not achieved in these cases, but numerical evidence suggests that the growth rate of the infinite increasing $(\Av(12),\Av(12))$ staircase is greater than $4$. This is surprising because the infinite increasing $(\Av(21),\Av(21))$ staircase consists precisely of the permutations in $\Av(321)$, which has a growth rate of $4$.

\section{Growth Rates of Principal Classes}
\label{sec-principal-gr}

It is somewhat remarkable that the crude bounding below by the containment of staircases and above by the naive merge bound can, in some cases, establish the exact growth rates of classes which are not themselves merges. To give a broad family for which this holds we appeal to a result of Jel{\'{\i}}nek and Valtr, who investigated the question of which classes are contained in the merge of two proper subclasses. Strengthening earlier results of B\'ona~\cite{bona:new-records-in-:} and Claesson, Jel{\'{\i}}nek, and Steingr{\'{\i}}msson~\cite{claesson:upper-bounds-fo:}, they established the following (we state their result in a symmetric, skew sum form).

\begin{proposition}[Jel{\'{\i}}nek and Valtr~\cite{jelinek:splittings-and-:}]
\label{prop-split-1}
For all nonempty permutations $\alpha$, $\beta$, and $\gamma$, we have
\begin{equation}
\label{eqn-split-1}\tag{$\mathsection$}
	\Av(\alpha\ominus\beta\ominus\gamma)
	\subseteq
	\Av(\alpha\ominus\beta) \odot \Av(\beta\ominus\gamma).
\end{equation}
\end{proposition}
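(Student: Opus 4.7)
The plan is to exhibit, for each $\pi \in \Av(\alpha\ominus\beta\ominus\gamma)$, a 2-coloring of the entries of $\pi$ into red and blue such that the red subpermutation lies in $\Av(\alpha\ominus\beta)$ and the blue subpermutation lies in $\Av(\beta\ominus\gamma)$. A first natural attempt is the ``simple'' coloring: color $x$ blue if some occurrence of $\alpha$ in $\pi$ lies strictly NW of $x$, and red otherwise. This at once yields the red half, because if the red subpermutation contained $\alpha\ominus\beta$, each entry in the $\beta$-part of that occurrence would have an $\alpha$ (namely the $\alpha$-part of the occurrence) strictly NW of it, making it blue. Unfortunately, the simple coloring is too coarse to secure the blue half, since distinct entries of a potential $\beta\ominus\gamma$-configuration in blue may be witnessed by distinct $\alpha$-occurrences that fail to jointly sit NW of every entry of the $\beta$-part.

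To remedy this, I would refine the coloring by declaring $x$ blue precisely when $x$ lies in the $\beta$-part of some occurrence of $\alpha\ominus\beta$ in $\pi$, and red otherwise. The red direction still goes through by the same reasoning: the $\beta$-part of any $\alpha\ominus\beta$-occurrence in $\pi$ is blue by fiat. For the blue direction, I would argue by contradiction: suppose the blue subpermutation contains a $\beta\ominus\gamma$-configuration with $\beta$-part $B_0$ and $\gamma$-part $G_0$. It is enough to locate an $\alpha$-occurrence $A \subseteq \pi$ strictly NW of $B_0$, for then $A \cup B_0 \cup G_0$ realizes $\alpha\ominus\beta\ominus\gamma$ in $\pi$, contradicting the hypothesis.

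To produce such an $A$, I would start from the extremal entry $y^{\top} \in B_0$ of largest value. Since $y^{\top}$ is blue, its witness $\alpha\ominus\beta$-occurrence has an $\alpha$-part whose values all exceed the maximum value of the associated $\beta$-part, and hence exceed $\operatorname{val}(y^{\top}) = \max \operatorname{val}(B_0)$, placing this $\alpha$-part strictly above $B_0$. The principal obstacle is then arranging the $\alpha$-part also to sit strictly to the left of $\min \operatorname{pos}(B_0)$: because the topmost and leftmost entries of $B_0$ are typically distinct, no single witness automatically satisfies both extremality requirements. I would overcome this by amalgamating the witnesses supplied by $y^{\top}$ and by the leftmost entry $y^{L}$ of $B_0$, and by invoking the avoidance hypothesis $\pi \in \Av(\alpha\ominus\beta\ominus\gamma)$ to rule out the obstructing arrangements that would prevent the two extremal witnesses from coalescing into a single $\alpha$-occurrence strictly NW of all of $B_0$.
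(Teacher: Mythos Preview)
The paper does not prove this proposition; it is quoted from Jel{\'{\i}}nek and Valtr and used as a black box, so there is no in-paper argument to compare against.

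Judged on its own, your proposal has a genuine gap at exactly the point you flag. You are right that the ``simple'' coloring (blue $=$ ``some $\alpha$ lies strictly NW'') can fail on the blue side: for $\alpha=\gamma=1$ and $\beta=12$, the permutation $32541\in\Av(4231)$ has left-to-right maxima $\{3,5\}$ red and the remaining entries $2,4,1$ blue, and that blue subsequence realises $231=\beta\ominus\gamma$. Your refined coloring (blue $=$ ``lies in the $\beta$-part of some $\alpha\ominus\beta$'') happens to repair this particular example, since $32541$ contains no $312$ at all and hence everything is red. But the blue-side argument you outline is not a proof. From a blue $\beta\ominus\gamma$ with $\beta$-part $B_0$ you obtain one $\alpha$-occurrence $A_{y^{\top}}$ lying above all of $B_0$ and another $A_{y^{L}}$ lying to the left of all of $B_0$, and then you assert that these ``amalgamate'' into a single $\alpha$ strictly NW of $B_0$ by ``invoking the avoidance hypothesis to rule out the obstructing arrangements.'' That sentence \emph{is} the content of the theorem: two copies of $\alpha$ sitting in overlapping but distinct rectangular regions do not in general yield a copy of $\alpha$ in the intersection of those regions, and you have not specified which occurrence of $\alpha\ominus\beta\ominus\gamma$ you would extract from the obstructing configuration, nor why the witnesses $B_{y^{\top}}$ and $B_{y^{L}}$ (which need not be contained in, or even related to, $B_0$) help. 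As written this is a strategy with the decisive step left as a gesture; the missing amalgamation is precisely the nontrivial part of the Jel{\'{\i}}nek--Valtr argument.
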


We caution the reader that while Theorem~\ref{thm-merge-gr-staircase} gives the growth rate of the class on the right-hand side of \eqref{eqn-split-1}, this is generally not the growth rate of the class on the left-hand side. Consider, for example, the permutation $4231=1\ominus 12\ominus 1$, where the class on the right-hand side of \eqref{eqn-split-1} has growth rate $16$. Indeed, establishing this upper bound of $16$ on the growth rate of $\Av(4231)$ was one of the original motivations of Claesson, Jel{\'{\i}}nek, and Steingr{\'{\i}}msson~\cite{claesson:upper-bounds-fo:}. B\'ona~\cite{bona:a-new-record-fo:,bona:a-new-upper-bou:} has since shown how to further restrict the allowable merges to achieve an upper bound of $13.74$.

However, in the case of $\beta=1$, equality is achieved. Consider the infinite $(\Av(\alpha\ominus 1), \Av(1\ominus\gamma))$ increasing staircase for any permutations $\alpha$ and $\gamma$. Suppose to the contrary that a member $\pi$ of this staircase were to contain $\alpha\ominus 1\ominus\gamma$, and consider the position of the entry participating as the `$1$' between the copies of $\alpha$ and $\gamma$. If this entry were to lie in a cell labeled by $\Av(\alpha\ominus 1)$, then there would have to be a copy of $\alpha$ above and to its left, showing that the cell itself contained $\alpha\ominus 1$, a contradiction. Similarly, such an entry cannot lie in a cell labeled by $\Av(1\ominus\gamma)$, as then it could not contain a copy of $\gamma$ below and to its right. This shows that $\Av(\alpha\ominus 1\ominus\gamma)$ contains the infinite $\Av(\alpha\ominus 1), \Av(1\ominus\gamma))$ increasing staircase, implying the following result of B\'ona.

\begin{theorem}[B\'ona~{\cite[Theorem 4.2]{bona:new-records-in-:}}]
\label{thm-bona-triple-skew}
For all permutations $\alpha$ and $\gamma$,
\[
	\gr(\Av(\alpha\ominus 1\ominus\gamma))=\left(\sqrt{\gr(\Av(\alpha\ominus 1))} + \sqrt{\gr(\Av(1\ominus\gamma))}\right)^2.
\]
\end{theorem}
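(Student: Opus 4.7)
The plan is to sandwich $\gr(\Av(\alpha \ominus 1 \ominus \gamma))$ between matching lower and upper bounds. The lower bound will come from exhibiting an infinite increasing $(\Av(\alpha\ominus 1), \Av(1\ominus\gamma))$ staircase contained in $\Av(\alpha\ominus 1\ominus\gamma)$, to which Proposition~\ref{prop-staircase-gr} applies. The upper bound will come from the Jel\'{\i}nek--Valtr containment of Proposition~\ref{prop-split-1} specialized to $\beta = 1$, combined with Proposition~\ref{prop-merge-gr}.

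For the lower bound I would make rigorous the sketch given just before the theorem statement. Take a permutation $\pi$ in the infinite increasing $(\Av(\alpha\ominus 1), \Av(1\ominus\gamma))$ staircase, together with the gridding coming from its membership in the staircase, and suppose for contradiction that $\pi$ contains $\alpha \ominus 1 \ominus \gamma$. Let $m$ denote the entry of $\pi$ playing the role of the middle $1$, and let $\kappa$ denote the cell of the gridding containing $m$. The key geometric observation to verify is that in the increasing staircase no other cell lies strictly above-and-to-the-left of a diagonal $\C$-cell, and no other cell lies strictly below-and-to-the-right of a superdiagonal $\D$-cell. Granting this, if $\kappa = \Av(\alpha\ominus 1)$ then the entries of $\pi$ realizing the copy of $\alpha$ must all lie in $\kappa$, so $\kappa$ itself contains $\alpha \ominus 1$, contradicting its label; and if $\kappa = \Av(1\ominus\gamma)$ then the entries realizing $\gamma$ must all lie in $\kappa$, so $\kappa$ contains $1\ominus\gamma$, again contradicting its label. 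Hence the staircase is contained in $\Av(\alpha\ominus 1\ominus\gamma)$, and Proposition~\ref{prop-staircase-gr} gives
\[
    \lgr(\Av(\alpha\ominus 1\ominus\gamma))
    \ge
    \left(\sqrt{\gr(\Av(\alpha\ominus 1))} + \sqrt{\gr(\Av(1\ominus\gamma))}\right)^2.
\]

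For the upper bound I would apply Proposition~\ref{prop-split-1} with $\beta=1$ to obtain $\Av(\alpha\ominus 1\ominus\gamma) \subseteq \Av(\alpha\ominus 1) \odot \Av(1\ominus\gamma)$, and then apply Proposition~\ref{prop-merge-gr} to the right-hand side. Since the permutations $\alpha\ominus 1$ and $1\ominus\gamma$ are sum-indecomposable, the principal classes $\Av(\alpha\ominus 1)$ and $\Av(1\ominus\gamma)$ are sum closed and therefore have proper growth rates, so the merge upper bound can be written in terms of $\gr$ rather than $\ugr$. This yields
\[
    \ugr(\Av(\alpha\ominus 1\ominus\gamma))
    \le
    \left(\sqrt{\gr(\Av(\alpha\ominus 1))} + \sqrt{\gr(\Av(1\ominus\gamma))}\right)^2,
\]
and combining with the lower bound on $\lgr$ forces equality throughout; in particular $\Av(\alpha\ominus 1\ominus\gamma)$ has a proper growth rate with the claimed value.

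The main obstacle is the geometric cell analysis underpinning the lower bound: one must carefully check, using the layout of the staircase depicted on the left of Figure~\ref{fig-CD-staircases}, that no staircase cell is strictly above-left of a diagonal $\C$-cell nor strictly below-right of a superdiagonal $\D$-cell. Once this localization is established, the entries realizing $\alpha$ (in the first case) or $\gamma$ (in the second) are pinned inside the same cell as $m$, and the contradiction with the cell label is immediate.
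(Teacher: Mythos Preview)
Your proposal is correct and matches the paper's own argument: the paper likewise shows that the infinite increasing $(\Av(\alpha\ominus 1),\Av(1\ominus\gamma))$ staircase avoids $\alpha\ominus 1\ominus\gamma$ by the same cell-localization of the middle `$1$', and combines this with the Jel{\'\i}nek--Valtr containment (Proposition~\ref{prop-split-1} with $\beta=1$) for the upper bound. The only cosmetic difference is that the paper invokes Theorem~\ref{thm-merge-gr-staircase} to compute the growth rate of the merge on the right-hand side, whereas you go directly through Proposition~\ref{prop-merge-gr} after noting that both principal classes are sum closed; either route yields the same bound.
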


An interesting special case of Theorem~\ref{thm-bona-triple-skew} is the class $\Av(54213)$, where 
\[
	\gr(\Av(54213))
	=
	\gr(\Av(1\ominus 1\ominus 213))
	=
	\left(1+\sqrt{\gr(4213)}\right)^2.
\]
The permutation $4213$ is a symmetry of $1342$, and in \cite{bona:exact-enumerati:}, B\'ona showed that $\gr(\Av(1342))=8$. Thus the growth rate of $\Av(54213)$ is $(1+\sqrt{8})^2=9+4\sqrt{2}$. This result was established by B\'ona in \cite{bona:the-limit-of-a-:}, before proving Theorem~\ref{thm-bona-triple-skew}, and was the first known non-integral growth rate of a principal class.

Our final application of staircases to establish another result of B\'ona.

\begin{theorem}[B\'ona~{\cite[Theorem 5.5]{bona:the-limit-of-a-:}}]
\label{thm-bona-minus-1}
If $\beta$ is sum indecomposable then
\[
	\gr(\Av(1\ominus\beta))\ge \left(1+\sqrt{\gr(\Av(\beta))}\right)^2.
\]
\end{theorem}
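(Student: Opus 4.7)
The plan is to exhibit an infinite staircase contained in $\Av(1\ominus\beta)$ whose lower bound from Proposition~\ref{prop-staircase-gr} matches the right-hand side of the theorem. Concretely, I would use the infinite increasing $(\Av(21),\Av(\beta))$ staircase. Since $\gr(\Av(21))=1$, Proposition~\ref{prop-staircase-gr} asserts that this staircase has growth rate at least $(1+\sqrt{\gr(\Av(\beta))})^2$, so it is enough to show that every permutation in the staircase avoids $1\ominus\beta$.

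The verification follows the template of the argument used above for Theorem~\ref{thm-bona-triple-skew}. Assume for contradiction that some $\pi$ in the staircase contains a copy of $1\ominus\beta$ and let $e$ be the entry playing the role of the leading `$1$', so that $e$ sits above and to the left of a copy of $\beta$. Consider which cell of the staircase contains $e$. If $e$ lies in an $\Av(21)$-cell, call it $c_i$, then because $c_i$ is increasing, no other entry of $c_i$ lies below and to the right of $e$; a direct inspection of the staircase's geometry shows that every entry of $\pi$ that is below and to the right of $e$ lies in $D_{i-1}$ or in $D_i$, and moreover every such $D_{i-1}$-entry sits below and to the left of every such $D_i$-entry. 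Hence the copy of $\beta$ embeds into a direct sum $A\oplus B$, where $A$ and $B$ are subpatterns of the $\Av(\beta)$-permutations occupying $D_{i-1}$ and $D_i$ respectively. Sum indecomposability of $\beta$ then forces the copy to lie entirely in $A$ or entirely in $B$, contradicting $A,B\in\Av(\beta)$ (the corner case $i=1$ is easier still, as the region is then contained in $D_1$ alone). If instead $e$ lies in an $\Av(\beta)$-cell $D_i$, the same geometric inspection shows that the below-and-right quadrant of $e$ is contained in $D_i$, whence $D_i$ would contain $\beta$, again a contradiction.

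Both cases yield contradictions, so the staircase is contained in $\Av(1\ominus\beta)$, and the desired inequality follows from Proposition~\ref{prop-staircase-gr}. The main step requiring care is the geometric bookkeeping in the first case: verifying that the below-and-right quadrant of a point in $c_i$ involves only $D_{i-1}$ and $D_i$, and that their contributions really do assemble into a direct-sum configuration. This is precisely where the sum indecomposability assumption is essential, and without it the argument would fail.
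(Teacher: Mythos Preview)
Your proposal is correct and follows exactly the same approach as the paper: both use the infinite increasing $(\Av(21),\Av(\beta))$ staircase together with Proposition~\ref{prop-staircase-gr}. The paper's proof is a single sentence asserting the containment of this staircase in $\Av(1\ominus\beta)$, whereas you have (correctly) spelled out the geometric case analysis that justifies it; your handling of the $c_i$ case, where the below-right quadrant meets $D_{i-1}\oplus D_i$ and sum indecomposability is invoked, is precisely the content the paper leaves implicit.
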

\begin{proof}
Because $\beta$ is sum indecomposable, we see that the infinite increasing $(\Av(21),\Av(\beta))$ staircase is contained in $\Av(1\ominus\beta)$, giving the bound.
\end{proof}

\section{Concluding Remarks}
\label{sec-IImergeI}

In searching for further evidence for, or a counterexample to, Question~\ref{question-merge-gr}, Theorem~\ref{thm-merge-gr-staircase} shows that at least one of the classes must be neither sum nor skew closed, and Proposition~\ref{prop-merge-gr-finite-intersection} shows that the two classes must have infinite intersection. We must choose at least one of the classes so that it has no proper sum or skew closed subclass with the same growth rate, as otherwise we could use a staircase construction with such subclasses to achieve the upper bound. For this reason we rule out classes such as $\Av(21)\ominus\Av(21)$, which is neither sum nor skew closed but contains a sum closed class, $\Av(21)$, of the same growth rate.

One of the simplest examples of a merge not covered by known results or resolved by the preceding remarks is $\Grid(\Av(21)\ \Av(21))$ merged with $\Av(21)$. Here we pose the following instance of Question~\ref{question-merge-gr}.

\begin{question}
\label{question-II-merge-I}
Is $\gr(\Grid(\Av(21)\ \Av(21))\odot \Av(21))=3+2\sqrt{2}$?
\end{question}

Although not relevant to the resolution of Question~\ref{question-II-merge-I}, it is curious that this merge is defined by a finite basis (``most'' merges do not seem to be finitely based), in particular,
\[
	\Grid(\Av(21)\ \Av(21))\odot\Av(21)
	=
	\Av(4321, 321654, 421653, 431652, 521643, 531642).
\]
For one direction of this equality, we see that $4321$ does not lie in $\Grid(\Av(21)\ \Av(21))\odot\Av(21)$, and that this merge is contained in $\Grid(\Av(321)\ \Av(321))$ (obtained by taking the merges of the individual cells of the grid with $\Av(21)$). Since $\Grid(\Av(321)\ \Av(321))$ is simply a juxtaposition of two classes, the results of Atkinson~\cite{atkinson:restricted-perm:} describe how to compute its basis. The claimed basis for the merge is in fact the basis of $\Av(4321)\cap\Grid(\Av(321)\ \Av(321))$. For the other direction, take a $4321$-avoiding permutation $\pi\in\Grid(\Av(321)\ \Av(321))$ and express it as the juxtaposition of two sequences $\pi_L$ and $\pi_R$ which are order isomorphic to $321$-avoiding permutations. If a non-left-to-right maximum of $\pi_L$ were greater than a non-right-to-left minimum of $\pi_R$ then it would follow that $\pi$ contained $4321$, a contradiction. Therefore $\pi$ is the merge of the left-to-right maxima of $\pi_L$ and the right-to-left minima of $\pi_R$, which together are order isomorphic to a member of $\Grid(\Av(21)\ \Av(21))$, with the remaining entries of $\pi_L$ and $\pi_R$, which are themselves increasing.

%
%
%
%
%
%

%

\bibliographystyle{acm}
\bibliography{../../refs}

\end{document}